\documentclass[a4paper,12pt,oneside]{article}
\usepackage{amsmath, amsthm, amssymb, hyperref}

\usepackage{geometry}
\usepackage{enumitem}
\setlist{noitemsep}

\usepackage{listings}
\usepackage{xcolor}

\definecolor{codeblue}{rgb}{0.3,0.3,0.6}
\definecolor{codegreen}{rgb}{0,0.6,0}
\definecolor{codegray}{rgb}{0.5,0.5,0.5}
\definecolor{codepurple}{rgb}{0.58,0,0.82}
\definecolor{backcolour}{rgb}{0.95,0.95,0.92}

\lstdefinestyle{mystyle}{
    backgroundcolor=\color{backcolour},   
    commentstyle=\color{codegreen},
    keywordstyle=\color{codeblue},
    numberstyle=\tiny\color{codegray},
    stringstyle=\color{codepurple},
    basicstyle=\ttfamily\footnotesize,
    breakatwhitespace=false,         
    breaklines=true,                 
    captionpos=b,                    
    keepspaces=true,                 
    numbers=left,                    
    numbersep=5pt,                  
    showspaces=false,                
    showstringspaces=false,
    showtabs=false,                  
    tabsize=2
}

\usepackage{amsthm,amsmath,amsfonts,amssymb}
\usepackage{xcolor}
   \usepackage{geometry}
\usepackage{authblk}

\newgeometry{vmargin={28mm}, hmargin={20mm,17mm}}   

\makeatletter

\hypersetup{
    colorlinks,
    linkcolor={red},
    citecolor={green},
    urlcolor={blue}
}

\newtheorem{theorem}{Theorem}
\newtheorem{lem}[theorem]{Lemma}

\newtheorem{quest}[theorem]{Question}

\newtheorem{defn}[theorem]{Definition}
\newtheorem{maintheorem}{Theorem}

\newtheorem{maincorollary}[maintheorem]{Corollary}

\usepackage{verbatim}
\usepackage{color}
\usepackage{authblk}
\hypersetup{
    colorlinks,
    linkcolor={orange},
    citecolor={magenta},
    urlcolor={blue}
}

\title{Groups with special presentations and star-graph $K_{3,3}$}

\author[1]{Bridgette Amoako\thanks{Email: \texttt{bamoako@uoguelph.ca}}}
\author[2]{Ihechukwu Chinyere\thanks{Corresponding author: \texttt{i.chinyere@up.ac.za; ihechukwu@aims.ac.za}}}
\author[3]{Bernard Bainson\thanks{Email: \texttt{ bboduoku@knust.edu.gh}}}
\affil[1]{Department of Mathematics and Statistics, University of Guelph, Canada}
\affil[2]{Department of Mathematics and Applied Mathematics, University of Pretoria, Hatfield 0028, Pretoria, South Africa}
\affil[3]{Department of Mathematics, Kwame Nkrumah University of Science and
Technology, Ghana}
\begin{document}
\maketitle
\begin{abstract}
We consider a question of Edjvet and Vdovina concerning which groups defined by special presentations are large. For each integer $n \ge 3$, we construct an $n$-generator one-relator presentation whose star graph is the complete bipartite graph $K_{n,n}$; the resulting groups are large and hyperbolic. We also classify concise special presentations with star graph $K_{3,3}$, showing that they are one-relator presentations and that, up to Tietze equivalence, there are exactly twelve that define torsion-free groups. The torsion cases arise precisely as positive powers of the relators in the torsion-free cases, and define pairwise non-isomorphic groups that remain large and hyperbolic.
\end{abstract}

\vspace{1em}
\noindent\textbf{Keywords:} One-relator groups, special presentations, hyperbolic, large, star-graph.

\section{Introduction}

Constrained incidence structures and group presentations serve as a meeting point between combinatorial group theory, incidence geometry, and geometric group theory. In this article, we study group presentations whose \emph{star-graphs} (see Section~\ref{sec:Preliminary}) are the incidence graphs of generalised polygons. This connection was observed by Howie ~\cite{JW89} in his study of the SQ-universality of $T(6)$-groups. In that work, Howie showed that these presentations form the only class of $T(6)$-groups for which SQ-universality could not be settled using his methods. (Recall that a countable group $G$ is \emph{SQ-universal} if every countable group embeds into a quotient of $G$.) Owing to this exceptional status, they became known as \emph{special presentations}, that is, finite presentations with relators of length three whose star-graphs are incidence graphs of finite projective planes. Howie further posed the problem of whether any such group is SQ-universal~\cite[Question~6.11]{JW89}.

\medskip
The theory of special presentations was systematically developed by Edjvet and Howie~\cite{EH88}, who showed that many such groups contain non-abelian free subgroups. Howie’s question was subsequently answered in the negative by Edjvet and Vdovina~\cite{EV10}, who also introduced a broader class of presentations, called \emph{$(m,k)$-special presentations}. These are finite presentations $P=\langle X \mid R\rangle$ satisfying:
\begin{enumerate}
\item the star-graph of $P$ is connected, bipartite, of diameter $m$ and girth $2m$, and each vertex has degree at least $3$;
\item each relator in $R$ has length $k$; and
\item if $m=2$, then $k \ge 4$.
\end{enumerate}
By the classical theorem of Feit and Higman~\cite{FH64}, the diameter parameter of a generalised polygon is restricted to
\(
m\in\{2,3,4,6,8\}.
\)
A group defined by $P$ is said to have a special presentation (or an $(m,k)$-special presentation, to emphasise the combinatorial parameters). Howie’s original special presentations correspond exactly to the $(3,3)$ case. 

\medskip
A further generalisation, the class of $(m,k,\nu)$-special presentations, was introduced in~\cite{CW21, CW23}. In this setting, the star graph is allowed to have $\nu \geq 1$ isomorphic connected components, each of which is the incidence graph of a generalised polygon. In particular, $(m,k)$-special presentations correspond precisely to the case $\nu = 1$, and we retain this notation when $\nu = 1$.

\medskip
In the case of cyclic presentations, the complete classification obtained in~\cite{CW21} shows that only $m=2$ or $m=3$ can occur. Moreover, all $(3,k)$-special presentations in this setting define hyperbolic groups, except for the $(3,3)$-special presentation, which defines a \emph{just-infinite} group~\cite{EV10}, that is, an infinite group all of whose non-trivial quotients are finite. It remains an open problem (see ~\cite[Question 2]{EV10}) whether there exist general construction methods (``example machines'') for producing $(m,k)$-special presentations for $m \in \{4,6,8\}$.

\medskip
The geometric significance of groups defined by special presentations was explored in subsequent work of Vdovina and collaborators. In~\cite{Vdovina99,Vdovina03}, Vdovina constructed finite polygonal complexes whose links are prescribed bipartite graphs, giving hyperbolic complexes whose universal covers are two-dimensional buildings, and the fundamental groups act simply transitively on the vertices of the universal cover. Kangaslampi and Vdovina~\cite{KV} and Carbone, Kangaslampi, and Vdovina~\cite{CKV12} further developed this framework via triangle presentations associated to the smallest generalised quadrangle $GQ(2,2)$, obtaining and classifying groups acting cocompactly and simply transitively on the corresponding triangular hyperbolic buildings. In analogy with this setting, the interest of the present paper lies in the smallest thin case, associated with the incidence graph $K_{3,3}$.

\medskip
Recall that a group is said to be \emph{large} if it has a finite-index subgroup admitting a surjection onto the free group of rank two. From the viewpoint of largeness and SQ-universality, the $(3,3)$-special presentations case stands apart since no group defined by such a presentation is SQ-universal~\cite{EV10}, so largeness does not occur in this setting. For all other $(m,k)$-special presentations with $(m,k)\ne(3,3)$, largeness becomes a natural and largely unexplored property. This motivates the following open problem posed by Edjvet and Vdovina~\cite[Problem~2]{EV10}.

\begin{quest}\label{quest1}
Which groups defined by $(m,k)$-special presentations are large?
\end{quest}

\medskip
For the $23$ torsion-free groups of Kangaslampi and Vdovina~\cite{KV}, SQ-universality follows from~\cite[Theorem~2(v)]{EV10}, but largeness remains unknown. Also, despite the infinite families produced in~\cite{CW21}, no explicit example with $\nu=1$, and in particular with $m=2$, is known to be both large and hyperbolic.  Our first main result provides such a family, thereby providing concrete evidence for to Question~\ref{quest1}.

\begin{maintheorem}\label{main1}
For each integers $\alpha\ge1$ and $n\geq 2$, let
\[
P(n,\alpha)=\langle x_1, x_2, \dots, x_n \mid w_n^\alpha\rangle
\]
be the one-relator presentation with relator defined recursively by
\[
w_1 = x_1, \quad
w_n = w_{n-1}\, x_n \, \prod_{j=1}^{n-1}(x_n x_{n-j}).
\]
Then the star-graph of \(P(n,\alpha)\) is isomorphic to the complete bipartite graph $K_{n,n}$.
\end{maintheorem}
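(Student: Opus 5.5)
The plan is to compute the star-graph directly from the cyclic two-letter subwords of the relator, using induction on $n$. I use the convention of Section~\ref{sec:Preliminary}: the vertex set is $X\cup X^{-1}$, and each cyclic subword $ab$ of a relator contributes one edge joining $a$ and $b^{-1}$.

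\emph{Step 1: reduce to $\alpha=1$.} The relator $w_n$ is a positive word, and $w_n^\alpha$ is cyclically reduced. The cyclic two-letter subwords of $w_n^\alpha$ are exactly those of $w_n$: the only junction not inside a copy of $w_n$ is (last letter of $w_n$)(first letter of $w_n$), which is already the wrap-around pair of $w_n$. The star-graph is a simple graph, so repeated occurrences do not add edges, and the star-graph of $P(n,\alpha)$ equals that of $P(n,1)$. I also note that the inverse relator gives the same edges: the subword $b^{-1}a^{-1}$ of the inverse again joins $a$ and $b^{-1}$. Since all letters are positive, every edge joins a vertex of $X$ to a vertex of $X^{-1}$, so the graph is bipartite with parts $X$ and $X^{-1}$. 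It therefore suffices to show the following claim.

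\emph{Claim.} The multiset of ordered pairs $(i,j)$ with $x_ix_j$ a cyclic subword of $w_n$ is exactly $\{1,\dots,n\}^2$, each pair occurring once. As a consistency check, $|w_n|=|w_{n-1}|+2n-1$ with $|w_1|=1$, so $|w_n|=n^2$, which matches the number of pairs.

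\emph{Step 2: prove the claim by induction.} Alongside the claim I track that $w_n$ begins with $x_1$ and ends with $x_1$. This holds for $n=1$, and for $n\ge2$ because the final factor of $w_n$ is $x_nx_1$.

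In the base case $n=1$, the only cyclic pair of $w_1=x_1$ is $(1,1)$.

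For the inductive step, write
\[
w_n = w_{n-1}\,x_n\,x_nx_{n-1}\,x_nx_{n-2}\cdots x_nx_1 .
\]
The linear (non-wrap) pairs inside $w_{n-1}$ are, by induction, all pairs in $\{1,\dots,n-1\}^2$ except the wrap pair $(1,1)$. The new pairs are:
\begin{itemize}
\item $(1,n)$ at the junction, since $w_{n-1}$ ends in $x_1$;
\item $(n,n)$ from $x_nx_n$;
\item $(n,n-1),(n-1,n),(n,n-2),(n-2,n),\dots,(2,n),(n,1)$ inside the product;
\item the wrap-around pair $(1,1)$ of $w_n$.
\end{itemize}
Together these give each pair involving the index $n$ exactly once, and they restore $(1,1)$. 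This proves the claim.

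\emph{Step 3: conclude.} Each ordered pair $(i,j)$ yields exactly one edge $x_i$—$x_j^{-1}$. Hence every vertex of $X$ is joined to every vertex of $X^{-1}$ by exactly one edge, and the star-graph is $K_{n,n}$.

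The only delicate point is the bookkeeping in Step 2: the wrap-around pair $(1,1)$ of $w_{n-1}$ disappears when $w_{n-1}$ is embedded in $w_n$ and must reappear as the wrap pair of $w_n$. This is exactly why I carry the invariant that $w_n$ starts and ends with $x_1$.
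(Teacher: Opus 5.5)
Your proof is correct and follows the same route as the paper: induction on $n$, appending $T=x_n\prod_{j=1}^{n-1}(x_nx_{n-j})$ to $w_{n-1}$ and checking that the $2n-1$ new cyclic adjacencies are exactly the pairs involving $x_n$, for $n^2$ pairs in total. Two points that the paper's proof passes over silently are made precise in yours: the invariant that $w_n$ begins and ends with $x_1$, so the wrap-around pair $(1,1)$ of $w_{n-1}$ reappears as the wrap pair of $w_n$, and the reduction from $w_n^\alpha$ to $w_n$.
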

By a standard curvature argument (see~\cite[Proof of Theorem~2]{EV10} and~\cite{GS90}), any group defined by an $(m,k,\nu)$-special presentation satisfying
\begin{equation}\label{ine}
\frac{1}{m}+\frac{2}{k}<1
\end{equation}
is hyperbolic. Moreover, by the theorem of Baumslag--Pride~\cite{BP}, a group admitting a one-relator presentation on $n$ generators has deficiency $n-1$ and is therefore large for $n\ge3$. In view of Theorem~\ref{main1}, the presentations under consideration are $(2,n^2)$-special, and the inequality \eqref{ine} holds for all $n\ge3$. Hence, both hyperbolicity and largeness follow immediately.

\begin{maincorollary}\label{maincor1}
For every integer $n \ge 3$, the presentation $P(n,1)$ is a $(2,n^2)$-special presentation and defines a group that is both large and hyperbolic.
\end{maincorollary}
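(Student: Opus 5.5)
The corollary follows by assembling Theorem~\ref{main1} (with $\alpha=1$), the curvature criterion \eqref{ine}, and the Baumslag--Pride theorem. We first check that $P(n,1)$ meets the three defining conditions of a $(2,k)$-special presentation with $\nu=1$, then deduce hyperbolicity and largeness.

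\textbf{The star graph.} By Theorem~\ref{main1}, the star graph of $P(n,1)$ is $K_{n,n}$. For $n\ge 3$ this graph is connected and bipartite, and every vertex has degree $n\ge 3$. Its diameter is $2$: two vertices on the same side share a common neighbour, and vertices on opposite sides are adjacent. Its girth is $4=2\cdot 2$: the graph is bipartite, so there are no odd cycles; it is simple, so there are no $2$-cycles; and any two vertices on each side span a $4$-cycle. Hence $K_{n,n}$ is the incidence graph of a thin generalised digon, so $m=2$.

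\textbf{The relator length.} Let $\ell_n$ be the length of $w_n$. From the recursion, $w_n$ appends to $w_{n-1}$ the letter $x_n$ followed by $n-1$ blocks $x_nx_{n-j}$, so
\[
\ell_n=\ell_{n-1}+1+2(n-1)=\ell_{n-1}+2n-1,\qquad \ell_1=1.
\]
Therefore $\ell_n=\sum_{i=1}^n(2i-1)=n^2$. All letters are positive, so there is no free cancellation. For the count to be the relevant length we also need $w_n$ to be cyclically reduced and not a proper power. Cyclic reduction is immediate from positivity. If $w_n$ were a proper power, the star graph would have repeated edges, which contradicts it being the simple graph $K_{n,n}$ with exactly $n^2=\ell_n$ edges. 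So $k=n^2\ge 9\ge 4$, and condition (3) of the definition holds. Thus $P(n,1)$ is $(2,n^2)$-special.

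\textbf{Hyperbolicity.} With $m=2$ and $k=n^2$, inequality \eqref{ine} reads $\tfrac12+\tfrac{2}{n^2}<1$, that is, $n^2>4$. This holds for all $n\ge 3$. By the cited curvature argument (\cite{EV10}, \cite{GS90}), the group is hyperbolic.

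\textbf{Largeness.} The presentation has $n$ generators and one relator, so its deficiency is $n-1\ge 2$. The Baumslag--Pride theorem \cite{BP} then gives that the group is large.

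The only step that needs any care is confirming that the relator is genuinely of length $n^2$ as a cyclic word, i.e.\ cyclically reduced and not a proper power. Positivity together with the edge count of $K_{n,n}$ settles this, and the remaining steps are routine checks.
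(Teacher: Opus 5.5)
Your proposal is correct and follows the paper's own argument. Theorem~\ref{main1} with $\alpha=1$ gives star graph $K_{n,n}$, so $m=2$; inequality \eqref{ine} reduces to $n^2>4$; and Baumslag--Pride applies because the deficiency is $n-1\ge 2$.

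Your explicit computation $\ell(w_n)=n^2$ and your check that $w_n$ is cyclically reduced and not a proper power are details the paper leaves implicit, and you handle them correctly. The only slip is terminological: $K_{n,n}$ with $n\ge 3$ is a thick, not thin, generalised digon, and nothing in the argument depends on this.
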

A group presentation is said to be \emph{redundant} if it contains a \emph{freely redundant relator}, that is, a relator which is freely trivial or freely conjugate to another relator (or to its inverse). A presentation containing no freely redundant relators is called \emph{concise}, and deleting all freely redundant relators gives a \emph{concise refinement}~\cite{CCH81}.

\medskip
Next, we obtain a complete classification of concise $(m,k)$-special presentations (up to Tietze equivalence) whose star-graphs are isomorphic to $K_{3,3}$. These constitute the smallest special presentations in terms of the number of generators.
\newpage
\begin{maintheorem}\label{main2}
A group \(G\) admits a concise \((m,k)\)-special presentation whose star-graph is \(K_{3,3}\) if and only if there exists an integer \(n \ge 1\) such that \(G\) has a presentation of the form
\[
\langle x,y,z \mid r^n \rangle,
\]
where \(r\) is one of the following words:
\begin{align*}
x^2 y^2 z^2 x z y,
&\qquad x^2 y x z^2 y^2 z, \\
x^2 y x^{-1} z y z^{-1} y z, 
&\qquad x^2 y x^{-1} z y z^{-1} y^{-1} z, \\
x^2 y x^{-1} z y^{-1} z y z, 
&\qquad x^2 y x^{-1} z y^{-1} z^{-1} y z, \\
x^2 y z^{-1} x y^{-1} z y z, 
&\qquad x^2 y z^{-1} y z y x^{-1} z, \\
x^2 y z^{-1} y x^{-1} z y z, 
&\qquad x^2 y^{-1} z x y z y z^{-1}, \\
x^2 y^{-1} z y z^{-1} x y z,
&\qquad x^2 y^{-1} x^{-1} z y z^{-1} y z.
\end{align*}
\end{maintheorem}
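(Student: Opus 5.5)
Star graph conventions first. For a presentation $\langle X\mid R\rangle$, the star graph has vertex set $X\cup X^{-1}$. Each cyclic subword $ab$ of a cyclically reduced relator (or of its inverse) contributes an edge joining $a^{-1}$ and $b$.

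\textbf{Generators and relator lengths.} If the star graph is $K_{3,3}$, there are six vertices, so $|X|=3$, say $X=\{x,y,z\}$. The graph $K_{3,3}$ has diameter $2$ and girth $4$, so $m=2$, and condition (3) forces $k\ge 4$. Each relator of length $k$ contributes $k$ edges, counted with multiplicity. For $K_{3,3}$ to be the star graph as a simple graph with no repeated edges, the total number of edges must be $9$. I will also use the requirement, implicit in the definition of special presentations, that distinct corners give distinct edges.

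Now suppose $R=\{r_1,\dots,r_t\}$ is concise and not a power. I would argue $t=1$ as follows. If $r$ is a proper power $s^n$, then $r$ and $s$ give the same set of edges. So I would allow the edge count to be $9$ only after passing to the root; equivalently, the star graph is computed from the roots. The total length of the roots is then $9$. Two roots of lengths $a+b=9$ with $a,b\ge 4$ and each of length $k$ is impossible, since the relators must all have the same length $k$. Hence there is a single relator $r=s^n$ with $|s|=9$.

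A subtlety is whether repeated relators could give multi-edges that still count as ``the'' graph. Conciseness excludes relators that are conjugate to one another. I would settle this by using the convention that, in a special presentation, the star graph is simple with edges in bijection with the cyclic two-letter subwords of the relator roots. Under that convention the count above stands.

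\textbf{Bipartition.} The bipartition of $K_{3,3}$ splits $\{x^{\pm1},y^{\pm1},z^{\pm1}\}$ into two triples. Up to inverting generators, which is a Tietze move, the parts are $\{x,y,z\}$ versus $\{x^{-1},y^{-1},z^{-1}\}$, or $\{x,x^{-1},y\}$ versus $\{y^{-1},z,z^{-1}\}$. The second option needs checking by the parity constraints it imposes. An edge joins $a^{-1}$ and $b$, so the condition is that for every consecutive pair $ab$ in the root $s$, the letter $a^{-1}$ and the letter $b$ lie in opposite parts. Each of the nine pairs (vertex in part 1, vertex in part 2) must occur exactly once.

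\textbf{Enumeration.} This is the main obstacle. I would reduce it to a finite enumeration of cyclic words $s$ of length $9$ over $\{x^{\pm1},y^{\pm1},z^{\pm1}\}$ that are cyclically reduced and realise all nine edges exactly once. The search is normalised by the symmetry group: permutations of generators, inversion of individual generators, inversion of $s$, and cyclic permutation. Each letter $v$ appears as a first letter (giving an edge at $v^{-1}$) and as a last letter (giving an edge at $v$). Since every vertex has degree $3$, the letters $v$ and $v^{-1}$ together occur exactly $3$ times in $s$. This forces each generator to appear three times counted with sign.

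I would use the symmetries to arrange that $s$ begins with $x^2$; a square must occur, which I would verify by a pigeonhole argument on the degrees. I would then branch letter by letter, pruning whenever an edge repeats. This is the computer-assisted or hand search, and I expect it to produce exactly the twelve words listed in Theorem~\ref{main2}. To show the twelve are pairwise inequivalent, I would separate them by invariants such as exponent sums and Alexander polynomials. That distinctness is not strictly required for the ``if and only if'', though.

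\textbf{Converse.} Any word in the list has star graph $K_{3,3}$, which is checked directly. Taking the $n$th power does not change the star graph, and $\langle x,y,z\mid r^n\rangle$ is concise, so the converse holds.
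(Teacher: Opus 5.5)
Your treatment of the one-relator case follows the paper's route: normalise to $x^2y^{\pm1}\cdots$ with each generator occurring three times, search for girth $4$ and diameter $2$, then identify the outputs by generator permutations and inversions, cyclic permutation and inversion. You are also right that pairwise non-isomorphism, which takes up most of the paper's proof, is not needed for the equivalence. One minor point: the second bipartition type cannot be discarded, since ten of the twelve listed words realise it.

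\textbf{The gap: reduction to a single relator.} Write the relators as $s_i^{n_i}$ with $s_i$ not a proper power. Your convention then gives $\sum_i|s_i|=9$ and $n_i|s_i|=k\ge4$. Equal relator lengths do not force equal root lengths, and condition (3) bounds $k$, not $|s_i|$. So the sentence excluding two roots is false even on its own terms: $|s_1|=4$, $|s_2|=5$, $n_1=5$, $n_2=4$ gives $k=20$. You also never treat the case $t\ge3$.

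\textbf{The step cannot be repaired.} Such presentations actually exist.
\begin{itemize}
\item The cyclic two-letter subwords of $x^2y$, $y^2z$, $z^2x$ are exactly the nine pairs $ab$ with $a,b\in\{x,y,z\}$, each occurring once. Hence $\langle x,y,z\mid (x^2y)^2,(y^2z)^2,(z^2x)^2\rangle$ is concise, all its relators have length $6\ge4$, and its star-graph is $K_{3,3}$ with parts $\{x,y,z\}$ and $\{x^{-1},y^{-1},z^{-1}\}$. This holds under the paper's convention and under yours.
\item Likewise $\langle x,y,z\mid (x^2y^2)^5,(z^2xzy)^4\rangle$ works, with $k=20$.
\item The first group has finite abelianisation, of order $72$. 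The second has abelianisation of torsion-free rank $1$.
\item Every group $\langle x,y,z\mid r^n\rangle$ has abelianisation of torsion-free rank at least $2$, so neither group has the claimed form.
\end{itemize}
So the ``only if'' direction is false as stated. Your counting does work once proper powers are excluded whenever $|R|\ge2$: then $|s_i|=k$, so $tk=9$ with $k\ge4$ forces $t=1$ and $k=9$.

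\textbf{The paper has the same blind spot.} Lemma~\ref{lem:k33} obtains $|R|=1$ from ``each relator of length $k$ contributes precisely $k$ edges''. That fails for proper powers, which are exactly the relators $r^n$, $n\ge2$, that Theorem~\ref{main2} includes. Neither argument establishes the theorem without such an extra hypothesis.
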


\medskip
Theorem~\ref{main2} enumerates all concise $(m,k)$-special presentations (up to equivalence) with star-graph $K_{3,3}$, while Corollary~\ref{maincor1} guarantees that the corresponding groups are large and hyperbolic. Among these, the torsion-free cases are characterised as follows.

\begin{maincorollary}\label{maincor2}
Among the group presentations described in Theorem~\ref{main2}, exactly twelve, corresponding to the case $n=1$, define torsion-free groups.
\end{maincorollary}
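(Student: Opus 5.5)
We need to prove Corollary~\ref{maincor2}: among the groups $\langle x,y,z \mid r^n\rangle$ of Theorem~\ref{main2}, the torsion-free ones are exactly those with $n=1$, and these twelve are pairwise distinct up to Tietze equivalence. The tool throughout is the classical theory of one-relator groups (Karrass--Magnus--Solitar): a one-relator group $\langle X\mid R\rangle$ has torsion if and only if $R$ is freely equal to a proper power $r^n$ with $n\ge 2$. In that case the torsion is generated, up to conjugacy, by the image of $r$, which has order exactly $n$.

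The first step is to check that each of the twelve listed words $r$ is not a proper power in the free group $F(x,y,z)$. Each word is cyclically reduced, since its first letter $x$ and last letter differ and are not mutually inverse. A proper power $u^m$ of a cyclically reduced word would force the length to be divisible by $m$, and the word would be periodic with period $|r|/m$. The lengths are $9$ (for $x^2y^2z^2xzy$ and $x^2yxz^2y^2z$) and $9$ or $10$ for the others. So it suffices to observe that $x^2$ occurs exactly once in each word, with no other occurrence of $x^{\pm1}$ adjacent in a way matching a nontrivial cyclic shift. A periodic word would have to repeat the unique $x^2$ block, which is impossible. Hence $r$ is not a proper power.

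Next, the case split on $n$.

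\textbf{Case $n=1$.} Since $r$ is not a proper power, $\langle x,y,z\mid r\rangle$ is torsion-free.

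\textbf{Case $n\ge2$.} $r^n$ is cyclically reduced and freely equals the proper power $r^n$ with $r$ not a power. Hence the image of $r$ is a nontrivial element of order exactly $n$ (Karrass--Magnus--Solitar), and the group has torsion.

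This shows the torsion-free groups are exactly those with $n=1$. The "positive powers" description in the abstract also follows: the torsion presentations are precisely $r^n$ for $n\ge2$.

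The remaining point is the count "exactly twelve": the twelve torsion-free groups must be pairwise non-isomorphic, or at least the twelve presentations must be Tietze-inequivalent. Theorem~\ref{main2} already lists them up to equivalence, so I would lean on that classification. If a genuine isomorphism invariant is wanted, I would distinguish the groups by the Alexander polynomial (via Fox calculus on $r$) or by abelianisation data, and by counting low-index subgroups or homomorphisms onto small finite groups (for instance $A_4$ or $\mathrm{PSL}(2,7)$) where the abelianisations coincide. This separation step is the main obstacle, since it may require a computer-assisted invariant calculation. Abelianisations alone will not suffice: the exponent sums are $(3,3,3)$ for $x^2y^2z^2xzy$ and $x^2yxz^2y^2z$, and similar small vectors for the others, which often coincide.
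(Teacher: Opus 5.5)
Your argument is correct and follows the paper's route: the paper proves the corollary by citing the Karrass--Magnus--Solitar theorem (for $r$ not a proper power, $\langle X\mid r^n\rangle$ has torsion iff $n\ge 2$), relies on the proof of Theorem~\ref{main2} for the pairwise non-isomorphism of the twelve groups, and leaves implicit the check that no listed $r$ is a proper power, which you supply. One small slip: all twelve words have length exactly $9$, as forced by Lemma~\ref{lem:k33}, not ``$9$ or $10$''; this does not affect your periodicity argument, since $r=u^m$ would force the $x^2$ block to recur.
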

Corollary \ref{maincor2} follows immediately from \cite[Theorem 3]{MR124384}.

\subsection{Outline}
The paper is organised as follows. Section~\ref{sec:Preliminary} derives structural constraints on $(2,9)$-special relators.  Section~\ref{sec:enumeration} enumerates all admissible relators and reduces them to distinct equivalence classes.  Section~\ref{sec:proofs} contains the proofs of the main Theorem \ref{main1} and Theorem \ref{main2}.  The Appendix \ref{admissible} describes the computational methods, all of which were implemented in \texttt{SageMath}~\cite{sagemath}.

\section{Preliminary results}\label{sec:Preliminary}

In this section, we introduce the basic notions and structural properties that will be used throughout the paper. We begin by recalling the definition of the star-graph associated with a group presentation. We then define $(m,k,\nu)$-special presentations and establish several elementary restrictions on the possible forms of their star-graphs. In particular, we show that any special presentation must involve at least three generators, and we determine the parameters of those whose star-graph is isomorphic to $K_{3,3}$.

\begin{defn}[star-graph]\label{def:stargraph}
Let $P = \langle X \mid R \rangle$ be a group presentation. The \emph{star-graph} of $P$ is the undirected labelled graph $\Gamma$ with vertex set
\(
V(\Gamma) = X^{\pm 1}.
\)
An edge joins two vertices $x_1, x_2 \in V(\Gamma)$ whenever there exists a relator of the form
\(x_1 x_2^{-1} u\) or \(x_2 x_1^{-1} u^{-1}\) 
in  $\widetilde{R}$, where $\widetilde{R}$ (also known as the \emph{symmetrised closure} of $R$)  denotes the set consisting of all elements of $R \cup R^{-1}$ together with all of their cyclic permutations.
\end{defn}

\begin{defn}[{\cite[Definition~2.1]{CW21}}]\label{def:mknspecial}
Let $m \ge 2$, $k \ge 3$, and $\nu \ge 1$ be integers. A finite group presentation $P = \langle X \mid R \rangle$ is said to be \emph{$(m,k,\nu)$–special} if the following conditions hold:
\begin{enumerate}
    \item The star-graph $\Gamma$ of $P$ has $\nu$ isomorphic connected components, each of which is bipartite of diameter $m$ and girth $2m$, and in which every vertex has degree at least $3$.
    \item Each relator $r \in R$ has length $k$.
    \item If $m = 2$, then $k \ge 4$.
\end{enumerate}
\end{defn}
We will refer to an $(m,k,\nu)$--special presentation simply as a special presentation whenever the parameters need not be specified.
\begin{lem}\label{lem:threegens}
Suppose $P = \langle X \mid R \rangle$ is an $(m,k,\nu)$-special presentation. Then $|X| \ge 3$.
\end{lem}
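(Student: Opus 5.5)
The plan is to count vertices in a single connected component of the star-graph $\Gamma$, using the girth and degree conditions of Definition~\ref{def:mknspecial}. The vertex set of $\Gamma$ is $X^{\pm1}$, so $|V(\Gamma)| = 2|X|$. By Definition~\ref{def:mknspecial}, $\Gamma$ has $\nu$ isomorphic components, each of order $2|X|/\nu \le 2|X|$. It is therefore enough to show that any one component $C$ has at least $6$ vertices.

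First I will bound the size of $C$ from below. Since $C$ is bipartite with girth $2m \ge 4$, it contains no loops. Every vertex of $C$ has degree at least $3$. I will read the degree as the number of distinct neighbours; this is the standard convention in the generalised-polygon setting, since the definition asks for a bipartite graph of girth $2m$, and multiple edges would create $2$-cycles. Fix a vertex $v \in C$ lying in one part of the bipartition. Then $v$ has at least $3$ distinct neighbours in the other part. Any such neighbour $w$ also has at least $3$ distinct neighbours, all lying in the first part. So both parts of $C$ contain at least $3$ vertices, and $|C| \ge 6$.

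It follows that $2|X| \ge |C| \ge 6$, so $|X| \ge 3$. Note that we do not even need $\nu = 1$ here: a single component already forces six vertices.

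The only delicate step is justifying that the degree-$3$ condition gives three \emph{distinct} neighbours. If the star-graph were allowed to carry parallel edges, a vertex could have degree $3$ with only one neighbour. I would handle this by pointing out that girth $2m$ with $m \ge 2$ rules out $2$-cycles, and hence parallel edges. This makes $C$ a simple bipartite graph of minimum degree at least $3$, and the count above goes through.
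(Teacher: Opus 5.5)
Your proof is correct and follows essentially the same route as the paper: bipartiteness together with minimum degree $3$ forces each side of the bipartition to contain at least three vertices, so $2|X|\ge 6$; the paper phrases this as a contradiction from $|V(\Gamma)|\le 4$. Your use of the girth condition to rule out parallel edges also justifies a step the paper leaves implicit, namely that a vertex's degree is at most the size of the opposite part.
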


\begin{proof}
Let $\Gamma$ denote the star-graph of $P$, and let $V(\Gamma)$ be its vertex set. Then
\[
V(\Gamma)=X \cup X^{-1},
\quad \text{so } |V(\Gamma)|=2|X|.
\]
If $|X|\le 2$, then $|V(\Gamma)|\le 4$. Since $\Gamma$ is bipartite, its vertex set admits a bipartition $(A, B)$ with $|B|\le 2$. Every vertex of $\Gamma$ is adjacent only to vertices in the opposite part of the bipartition, so any vertex in $A$ has degree at most $|B|\le 2$. Hence $\Gamma$ contains a vertex of degree at most $2$. This contradicts the assumption that every vertex has degree at least $3$. Therefore $|X|\ge 3$.
\end{proof}

\medskip
If a one–relator presentation $P=\langle X \mid r\rangle$ has star-graph isomorphic to $K_{3,3}$, then for every $n\in\mathbb{N}$ the presentation $\langle X \mid r^{\,n}\rangle$ also has star-graph $K_{3,3}$. Indeed, replacing $r$ by a positive power does not change the set of length--two subwords appearing in the symmetrised closure, and hence does not alter the edge set of the star-graph.  Also, freely redundant relators contribute no new edges to the star-graph, so replacing a presentation by a concise refinement leaves the star-graph unchanged. Consequently, in classifying presentations whose star-graph is $K_{3,3}$, we may assume without loss of generality that the relator $r$ is not a proper power and that the presentation is concise.

\begin{lem}\label{lem:k33}
Suppose $P = \langle X \mid R \rangle$ is a concise $(m,k,\nu)$-special presentation whose star-graph is $K_{3,3}$. Then $R=\{r\}$, $\ell(r)=k=9$, and $\nu=1$.
\end{lem}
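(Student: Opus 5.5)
Since $K_{3,3}$ has $6$ vertices and $V(\Gamma)=X\cup X^{-1}$, we get $|X|=3$. Also $K_{3,3}$ is connected, so it is a single component and $\nu=1$. It is bipartite of diameter $2$ and girth $4$, so $m=2$, and condition~3 of Definition~\ref{def:mknspecial} then gives $k\ge 4$.

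The main step is to count edges. I will use the standard correspondence between the star-graph and cyclic subwords. Each relator $r\in R$, read cyclically, has $\ell(r)=k$ cyclic positions. Each position, i.e.\ each cyclic length-two subword $ab$ of $r$, contributes the edge $\{a^{-1},b\}$ of $\Gamma$. Passing to inverses and cyclic permutations in $\widetilde R$ only reproduces these same edges. Hence the total number of edge-occurrences contributed by $R$ is $\sum_{r\in R}\ell(r)=k|R|$.

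To show that distinct occurrences give distinct edges, I would argue from the girth. Suppose some edge $\{u,v\}$ arose from two different positions, either in the same relator or in two different relators. Then reading around the relators near those positions produces either a closed path of length $2$ (a repeated edge), which violates girth $2m=4$, or a proper-power, freely-conjugate or freely-trivial situation, which is excluded by conciseness. I expect this to be the main obstacle. The cleanest route is the standard convention for special presentations (as in Howie and Edjvet--Vdovina) that $\Gamma$ is a simple graph with edges counted with multiplicity, so that a repeated edge would be a $2$-cycle and contradict girth $4$. I would invoke that convention explicitly, together with the earlier remark that $r$ may be assumed not to be a proper power, since a proper power $s^n$ would repeat each of its edges $n$ times.

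Granting this, $k|R| = |E(K_{3,3})| = 9$. Since $k\ge 4$ and $|R|\ge1$, and $9=1\cdot 9=3\cdot 3=9\cdot 1$, the only possibility is $|R|=1$ and $k=9$. Therefore $R=\{r\}$ with $\ell(r)=9$.
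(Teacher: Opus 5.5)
Your proof is correct and follows the paper's argument: $m=2$ gives $k\ge 4$, counting one edge per cyclic position gives $k|R|=9$, which forces $k=9$ and $|R|=1$, and connectedness of $K_{3,3}$ gives $\nu=1$. You are right to rest the count on the convention that every cyclic position contributes its own edge, which the paper also uses without comment when it says each relator contributes precisely $k$ edges. You should drop the alternative conciseness argument, because it fails under the simple-graph reading: $x^3y^2z^2xzy$ is cyclically reduced, not a proper power, has simple star-graph $K_{3,3}$, yet has length $10$.
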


\begin{proof}
Let $P=\langle X \mid R\rangle$ be a concise $(m,k,\nu)$–special presentation whose star-graph is $K_{3,3}$. Then, $m=2$ and so every relator has length $k\ge4$ by Definition~\ref{def:mknspecial}.  The graph $K_{3,3}$ has  exactly $9$ edges. In the construction of the star-graph, each relator of length $k$ contributes precisely $k$ edges. Therefore
\[
k \cdot |R| \;=\; 9.
\]
Since $k\ge4$, the only possible factorisation is $k=9$ and $|R|=1$. Hence $R=\{r\}$ with $\ell(r)=k=9$.  Finally, $K_{3,3}$ is connected, so the star-graph has a single component and hence $\nu=1$.
\end{proof}
A group $G$ is called \emph{Hopfian} if every surjective endomorphism of $G$ is an automorphism. Before ending this section, we give a result about one-relator groups which will be useful later.
\begin{lem}\label{hopf}
Let
\(
H_1=\langle X \mid r\rangle\) and \(H_2=\langle Y \mid s\rangle
\) be
Hopfian groups with $r,s$ not proper powers. For $n\ge2$, set
\(
G_1=\langle X \mid r^n\rangle\) and \(G_2=\langle Y \mid s^n\rangle.
\)
If $H_1 \not\cong H_2$, then
\(
G_1 \not\cong G_2.
\)
\end{lem}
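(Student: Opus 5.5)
We argue by contraposition: suppose $\phi\colon G_1\to G_2$ is an isomorphism and deduce $H_1\cong H_2$. The plan is to characterise the kernel $N_1$ of the natural surjection $G_1\to H_1$ intrinsically, so that $\phi$ must carry it onto the corresponding kernel $N_2$ of $G_2\to H_2$.

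\textbf{Step 1: describe the kernels.} Since $r$ is not a proper power, the Karrass--Magnus--Solitar theorem shows that every finite-order element of $G_1=\langle X\mid r^n\rangle$ is conjugate to a power of $r$. Hence $N_1$, the normal closure of $r$ in $G_1$, is exactly the subgroup generated by all torsion elements of $G_1$. The same argument gives that $N_2$ is the subgroup of $G_2$ generated by its torsion elements. This description is invariant under isomorphism, so $\phi(N_1)=N_2$, and $\phi$ induces an isomorphism $H_1\cong G_1/N_1\to G_2/N_2\cong H_2$. This contradicts $H_1\not\cong H_2$.

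This route does not use the Hopfian hypothesis. If the torsion characterisation is considered too strong to cite, there is a fallback that does use it. Any homomorphism $G_1\to H_2$ kills the image of $r$ whenever that image has finite order, since $H_2$ is torsion-free by the Karrass--Magnus--Solitar theorem applied with $n=1$. So $\phi$ composed with $G_2\to H_2$ factors through $H_1$, giving a surjection $H_1\to H_2$, and symmetrically a surjection $H_2\to H_1$. Composing gives a surjective endomorphism of $H_1$, which is injective by the Hopfian property. Therefore $H_1\to H_2$ is injective as well, hence an isomorphism.

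\textbf{Main obstacle.} The key input is the torsion theorem for one-relator groups: torsion elements of $\langle X\mid r^n\rangle$ are conjugates of powers of $r$, and $\langle X\mid r\rangle$ is torsion-free when $r$ is not a proper power. Everything after that is formal. In the fallback route, the only other point to check is that $\phi(r)$ has finite order $n$ in $G_2$, which holds because $r$ has order exactly $n$ in $G_1$.
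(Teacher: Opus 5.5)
Your proof is correct, and your main route is genuinely different from the paper's. The paper argues exactly as in your fallback. It transports $G_1\twoheadrightarrow H_1$ across the isomorphism and uses torsion-freeness of $H_1$ to kill $s$, which gives an epimorphism $H_2\twoheadrightarrow H_1$. It obtains $H_1\twoheadrightarrow H_2$ by symmetry, and then invokes the Hopfian property to upgrade these mutual epimorphisms to an isomorphism.

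Your Step~1 instead identifies $N_i=\ker(G_i\to H_i)$ intrinsically as the subgroup generated by the torsion elements of $G_i$. That subgroup is characteristic, so any isomorphism $G_1\to G_2$ descends directly to an isomorphism $H_1\to H_2$.

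What this buys is a strictly stronger lemma. Neither the Hopfian hypothesis nor $n\ge 2$ is needed, so the appeal to Hopficity of hyperbolic groups at the end of the proof of Theorem~\ref{main2} becomes unnecessary. It also shows that the paper's two epimorphisms, both induced by the same isomorphism, are in fact mutually inverse.

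You also use slightly more of Karrass--Magnus--Solitar than required. The inclusion of every torsion element of $G_1$ into $N_1$ already follows from torsion-freeness of $H_1=G_1/N_1$. The reverse inclusion holds simply because $r$ has finite order in $G_1$ and $N_1$ is generated by its conjugates. So the conjugacy part of the theorem is never needed; only torsion-freeness of $\langle X\mid r\rangle$ is.
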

\begin{proof}
Assume for contradiction that $H_1 \not\cong H_2$ but $G_1 \cong G_2$. Each $G_i$ admits a natural epimorphism onto $H_i$ by adding the relation $r=1$ or $s=1$, respectively. Transporting the map $G_1 \twoheadrightarrow H_1$ across an isomorphism $G_1 \cong G_2$ gives an epimorphism
\(
\psi: G_2 \twoheadrightarrow H_1.
\)
In $G_2$, the element $s$ has finite order $n$, and every torsion element is conjugate to a power of $s$. Since $H_1$ is torsion-free, $\psi(s)=1$. Hence $\ker(\phi)$ contains the normal closure of $s$, and therefore $\psi$ factors through the quotient
\(
H_2 = G_2/\!\langle\!\langle s\rangle\!\rangle,
\)
giving an epimorphism from $H_2$ to  $H_1$. By symmetry, we also obtain an epimorphism from $H_1$ to  $H_2$. Hence, $H_1$ and $H_2$ admit mutual epimorphisms. Since $H_1$ and $H_2$ are Hopfian, the compositions
\[
H_1 \twoheadrightarrow H_1,\quad H_2 \twoheadrightarrow H_2
\]
are automorphisms. Hence $H_1 \cong H_2$, contradicting the hypothesis. Therefore $G_1 \not\cong G_2$.
\end{proof}

\section{Relators for \texorpdfstring{$(2,9)$}{(2,9)}-special presentations}\label{sec:enumeration}

Let $F(x,y,z)$ denote the free group of rank $3$ on the generators $x,y,z$. In this section, we classify all cyclically reduced words of length $9$ in $F(x,y,z)$ that define $(2,9)$-special presentations whose star-graphs are isomorphic to $K_{3,3}$. By Lemma~\ref{lem:k33}, each such presentation is determined by a cyclically-reduced relator of length $9$. Our objective is to enumerate all admissible relators and then refine this enumeration to obtain the isomorphism classes of the corresponding one-relator groups.

\medskip
The classification proceeds in two steps. First, we impose the combinatorial constraints coming from the $K_{3,3}$ star-graph condition and use a computer search (Appendix~\ref{admissible}) to generate all admissible relators. This gives a finite list of words, which is then reduced modulo cyclic permutation, re-labelling and inversion.

\begin{lem}\label{lem:enumeration}
The set of admissible relators $R$ decomposes as
\(
R = \bigcup_{i=1}^{12} R_i,
\)
where each $R_i$ is given explicitly below.
\begin{align*}
R_1 &= \{x^2 y x z y^2 z^2, \; x^2 y z x z^2 y^2, \; x^2 y^2 x z y z^2, \; x^2 y^2 z y x z^2, \; x^2 y^2 z^2 y x z, \; x^2 y^2 z^2 x z y\}, \\
R_2 &= \{x^2 y z y^2 x z^2, \; x^2 y z^2 x z y^2, \; x^2 y z^2 y^2 x z, \; x^2 y^2 x z^2 y z, \; x^2 y^2 z x z^2 y, \; x^2 y x z^2 y^2 z\}, \\
R_3 &= \{x^2 y x^{-1} z y z^{-1} y z, \; x^2 y z y^{-1} z y x^{-1} z\}, 
R_4 = \{x^2 y x^{-1} z y z^{-1} y^{-1} z, \; x^2 y z^{-1} y^{-1} z y x^{-1} z\}, \\
R_5 &= \{x^2 y x^{-1} z y^{-1} z y z, \; x^2 y z y z^{-1} y x^{-1} z\}, 
R_6 = \{x^2 y x^{-1} z y^{-1} z^{-1} y z, \; x^2 y z y^{-1} z^{-1} y x^{-1} z\}, \\
R_7 &= \{x^2 y z^{-1} x y^{-1} z y z, \; x^2 y z y z^{-1} x y^{-1} z\}, 
R_8 = \{x^2 y z^{-1} y z y x^{-1} z, \; x^2 y x^{-1} z y z y^{-1} z\}, \\
R_9 &= \{x^2 y z^{-1} y x^{-1} z y z, \; x^2 y z y x^{-1} z y^{-1} z\}, 
R_{10} = \{x^2 y^{-1} z x y z y z^{-1}, \; x^2 y^{-1} z y z x y z^{-1}\}, \\
R_{11} &= \{x^2 y^{-1} z y z^{-1} x y z, \; x^2 y z x y^{-1} z y z^{-1}\}, 
R_{12} = \{x^2 y^{-1} x^{-1} z y z^{-1} y z, \; x^2 y z y^{-1} z y x^{-1} z^{-1}\}.
\end{align*}
\end{lem}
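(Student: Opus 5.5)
We need to plan a proof that the admissible relators, normalised as in the computer search, are exactly the listed words, grouped into twelve classes. Here "admissible" means: cyclically reduced words of length $9$ in $F(x,y,z)$ whose star-graph is $K_{3,3}$, normalised so that the word begins with $x^2$ (the exact normal form is fixed by the Appendix). The classes $R_i$ are the orbits under cyclic permutation, inversion and relabelling (permutations of $\{x,y,z\}$ and inversions of individual generators), restricted to normalised representatives.

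\textbf{Step 1: translate the star-graph condition into local conditions.} Write the relator cyclically as $r=a_1a_2\cdots a_9$, indices mod $9$. Each cyclic length-two subword $a_ia_{i+1}$ contributes exactly one edge $\{a_i^{-1},a_{i+1}\}$ of the star-graph; inverting the relator gives the same edge. By Lemma~\ref{lem:k33} there are exactly $9$ such edges, and $K_{3,3}$ has $9$ edges. So the condition is: the nine edges $\{a_i^{-1},a_{i+1}\}$ are pairwise distinct and form a copy of $K_{3,3}$ on the six vertices $x^{\pm1},y^{\pm1},z^{\pm1}$.

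Equivalently, we need a bipartition $X^{\pm1}=A\sqcup B$ with $|A|=|B|=3$ such that:
\begin{itemize}
\item every consecutive pair has $a_i^{-1}$ and $a_{i+1}$ on opposite sides;
\item all nine pairs are distinct, so every edge of $K_{3,3}$ occurs once.
\end{itemize}
Degree considerations then force each vertex $v$ to have degree $3$: the number of occurrences of $v$ as some $a_{i+1}$, plus the number of occurrences of $v^{-1}$ as some $a_i$, equals $3$.

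\textbf{Step 2: normalise.} Up to relabelling there are only two bipartition types.
\begin{itemize}
\item $A$ contains exactly one of each pair $\{v,v^{-1}\}$. After inverting generators we may take $A=\{x,y,z\}$. This should give the positive words, $R_1$ and $R_2$.
\item $A$ contains a pair $v,v^{-1}$, for example $A=\{x,x^{-1},y\}$. This should give the mixed-sign families $R_3$ to $R_{12}$.
\end{itemize}
I would also check by a short counting argument that some letter is squared, which justifies starting every word with $x^2$. Then we fix $a_1=a_2=x$ and run a backtracking search over $a_3,\dots,a_9$ with three constraints:
\begin{itemize}
\item free and cyclic reduction;
\item the opposite-sides condition;
\item no edge repeated.
\end{itemize}
This gives a finite list, produced by the \texttt{SageMath} code in Appendix~\ref{admissible}.

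\textbf{Step 3: partition into orbits.} Apply the equivalence group generated by cyclic shifts, inversion, $S_3$ relabelling and sign changes, then renormalise each image to a representative starting with $x^2$. The orbits are computed by canonical forms: take the lexicographically least normalised image. We then check that they are exactly $R_1,\dots,R_{12}$, pairwise disjoint and covering the list.

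\textbf{Main obstacle.} Steps 1 and 3 are routine; the real difficulty is exhaustiveness. We must be sure the normalisation in Step 2 loses no relator, in particular that every admissible word really has a cyclic conjugate, or a conjugate of an equivalent word, beginning with $x^2$. I would prove this first by hand. If it failed, I would instead run the search without normalisation and normalise afterwards. Either way, correctness ultimately rests on the exhaustive computer enumeration, supported by the hand verification that each listed word indeed yields $K_{3,3}$.
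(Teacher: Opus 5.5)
Your plan is correct and is essentially the paper's own argument. The shared steps are the degree count forcing each generator to occur exactly three times, the forced square, normalisation to an initial segment $x^2y^{\pm1}$ by cyclic permutation, relabelling, sign changes and inversion, an exhaustive \texttt{SageMath} search, and grouping of the $32$ resulting words into twelve classes. Your ``short counting argument'' for the square is just parity: a $3+3$ bipartition of $\{x^{\pm1},y^{\pm1},z^{\pm1}\}$ must separate some $t$ from $t^{-1}$, and the edge $\{t,t^{-1}\}$ of $K_{3,3}$ can only come from $t^{2}$ or $t^{-2}$.

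Two details need tightening. First, to land exactly on the listed $R_i$ you must use the Appendix's full normal form: third letter $y^{\pm1}$, and each of $y,z$ occurring with positive exponent at least twice. Second, in the mixed case only the separated generator can be squared, so your sample $A=\{x,x^{-1},y\}$ is incompatible with $a_1=a_2=x$. With your edge convention and third letter $y^{\pm1}$, the bipartition is forced to be $\{x,y,y^{-1}\}\sqcup\{x^{-1},z,z^{-1}\}$.
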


The proof is computational and uses structural constraints imposed by the $K_{3,3}$ star-graph condition. Each admissible relator must contain each of $x^{\pm1}, y^{\pm1}, z^{\pm1}$ exactly three times. By replacing a generator $t \in \{x,y,z\}$ with $t^{-1}$ if necessary, we may further assume that the number of occurrences of $t$ is at least the number of occurrences of $t^{-1}$. Note that the $K_{3,3}$ condition forces the occurrence of $t^2$, and not $t^3$, as a subword for some $t \in \{x,y,z\}$. We normalise by cyclic permutation, re-labelling, and inversion so that each word begins with $x^2 y^{\pm 1}$. Indeed, after fixing an initial segment $x^2$, the next letter cannot be $x^{\pm1}$ without violating cyclic reduction of $K_{3,3}$ condition, and hence must be either $y^{\pm1}$ or $z^{\pm1}$. In the latter case, we apply the automorphism of the free group that swaps $y$ and $z$, reducing to the former situation. The Sage code in Appendix~\ref{admissible} enumerates all cyclically reduced words of length $9$ that satisfy these constraints, and then selects those whose star-graphs have girth $4$ and diameter $2$. The output consists of $32$ words, which form the $12$ equivalence classes of Lemma~\ref{lem:enumeration}.

\medskip
To obtain a complete classification, we next introduce additional automorphisms of $F(x,y,z)$. For each $t \in \{x,y,z\}$, let $\phi_t$ be the automorphism fixing $t$ and swapping the other two generators, and let $\rho_t$ be the automorphism inverting $t$ while fixing the remaining generators. These automorphisms, together with cyclic permutation and inversion, generate all identifications needed to relate relators within each class $R_i$.

\begin{lem}\label{lem:ri-isomorphism}
Let $R = \bigcup_{i=1}^{12} R_i$ be the decomposition given in Lemma~\ref{lem:enumeration}. 
For each $i = 1, \dots, 12$ and any $w, w' \in R_i$, the one-relator groups defined by
\(
\langle x, y, z \mid w \rangle\) and  
\(\langle x, y, z \mid w' \rangle
\)
are isomorphic via compositions of the automorphisms $\phi_t$, $\rho_t$, together with cyclic permutation and inversion.
\end{lem}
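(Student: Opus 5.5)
The plan is to handle each class $R_i$ separately. For each pair $w,w'\in R_i$ I will write down an explicit automorphism $\theta$ of $F(x,y,z)$, built from the $\phi_t$ and $\rho_t$. I will then check that $\theta(w)$ equals a cyclic permutation of $w'$ or of $w'^{-1}$. This is enough. An automorphism $\theta$ with $\theta(w)$ conjugate to $w'^{\pm1}$ maps the normal closure of $w$ onto the normal closure of $w'$. So it induces an isomorphism $\langle x,y,z\mid w\rangle\cong\langle x,y,z\mid w'\rangle$. Since being isomorphic is an equivalence relation, it suffices within each $R_i$ to connect every word to one chosen representative. For $R_1$ and $R_2$ I will take the first listed word. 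For $R_3,\dots,R_{12}$ only one identification is needed per class.

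For the two-element classes $R_3,\dots,R_{12}$, the natural first move is inversion followed by cyclic permutation back to a word starting with $x^2$. Take $R_3$ as an example. We have $w^{-1}=z^{-1}y^{-1}zy^{-1}z^{-1}xy^{-1}x^{-2}$. Applying $\rho_x\rho_y\rho_z$ to this gives $zyz^{-1}yzx^{-1}yx^2$, and cyclically rotating yields $x^2yzyz^{-1}yx^{-1}\cdots$. If this is not literally $w'$, I will compose further with $\phi_x$ (swap $y,z$) or with a single $\rho_t$, and compare again.

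The search space is small. There are $48$ signed permutations of the generators, two orientations, and nine rotations. Each candidate $\theta$ is a single free-group substitution followed by a string comparison. For the six-element classes $R_1,R_2$, which consist of positive words, only the permutations $\phi_t$ together with inversion and reversal-type symmetries are relevant. Inversion must be combined with $\rho_x\rho_y\rho_z$ to restore positivity. I will exhibit five maps per class, or equivalently observe that the words form a single orbit under the group generated by these operations.

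The main obstacle is purely bookkeeping. For each pair I must find the correct combination and verify the equality letter by letter, including the cyclic shift. I would first try, for each class, the composite ``invert, then $\rho_x\rho_y\rho_z$, then rotate to start at $x^2$''. If that fails, I would pre-compose with $\phi_x$. In practice I would confirm all cases with the same SageMath routine as in Appendix~\ref{admissible}, which canonicalises words under the generated group. I would record the witnessing automorphism for each pair as a table.
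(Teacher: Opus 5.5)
Your proposal is correct and is essentially the paper's proof: the paper likewise records, in the table of Appendix~\ref{app:automorphisms}, an explicit composition of the $\phi_t$, $\rho_t$, inversion and cyclic permutation that carries each word of $R_i$ to a fixed representative. One minor slip in your $R_3$ example: the correct rotation is $x^2zyz^{-1}yzx^{-1}y$, and your planned fallback $\phi_x$ then gives exactly $x^2yzy^{-1}zyx^{-1}z$. This same composite (invert, apply $\rho_x\rho_y\rho_z$, rotate, apply $\phi_x$) works for all ten two-element classes $R_3,\dots,R_{12}$, just as in the paper's table.
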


\begin{proof}
For each class $R_i$, Table~\ref{tab:appendix-maps} in the Appendix gives an explicit composition of automorphisms, inversion, and cyclic permutation sending any relator in $R_i$ to a chosen representative $w_0$. This shows that all presentations within a given class define isomorphic one-relator groups.
\end{proof}

\section{Main}\label{sec:proofs}

In this section, we prove Theorem~\ref{main1} and Theorem~\ref{main2}. 

\medskip
We begin with the proof of Theorem~\ref{main1}, which constructs an explicit family of one-relator presentations and determines their star-graphs. 
\begin{proof}[Proof of Theorem~\ref{main1}]
For integers $\alpha\ge 1$ and $n\geq 2$ consider the one--relator presentation
\[
P(n,\alpha)=\langle x_1,\dots,x_n\mid w_n^\alpha\rangle,
\quad 
 w_1=x_1,\quad 
 w_n=w_{n-1}\,x_n\prod_{j=1}^{n-1}(x_nx_{n-j})\ (n\ge 2).
\]
We show that the star-graph of $P_n$ is the complete bipartite graph $K_{n,n}$. Recall that for a cyclically reduced relator $w$, each cyclic subword $ab$ contributes an oriented edge recorded as $(a,b^{-1})$.  Since $w_n$ is a positive word, all such pairs are of the form $(x_i,x_j^{-1})$.  Hence, it suffices to prove that the cyclic adjacencies in $w_n$ realise every ordered pair $(x_i,x_j^{-1})$ with $1\le i,j\le n$ exactly once. The argument proceeds by induction. 

\medskip
For $n=2$ we have $w_2=x_1x_2x_2x_1$, whose cyclic adjacencies give precisely the four pairs $(x_i,x_j^{-1})$ with $i,j\in\{1,2\}$.  Assume the statement holds for $n-1\ge 2$.  Write
\[
w_n = w_{n-1}\,T,
\quad 
T = x_n\,(x_nx_{n-1})\,(x_nx_{n-2})\cdots(x_nx_1).
\]
The cyclic adjacencies contributed by $w_{n-1}$ realise all pairs $(x_i,x_j^{-1})$ with $1\le i,j\le n-1$.  The new adjacencies arising from $T$ and from the boundary between $w_{n-1}$ and $T$ contributes exactly the pairs
\[
(x_n,x_n^{-1}),\quad (x_n,x_j^{-1}),\quad (x_j,x_n^{-1}) \quad (1\le j\le n-1).
\]
These are disjoint from the previous $(n-1)^2$ pairs and account for $2n-1$ new pairs.  Hence, the total number of distinct pairs realised by $w_n$ is
\(
(n-1)^2+(2n-1)=n^2,
\)
which equals the total number of ordered pairs $(x_i,x_j^{-1})$ with $1\le i,j\le n$.  Therefore the star-graph of $P(n, \alpha)$ is $K_{n,n}$.
\end{proof}

Finally, we turn to Theorem~\ref{main2}, where the goal is to distinguish the resulting groups up to isomorphism. This requires a finer invariant: for a finitely presented group, the collection of abelianisations of its finite-index subgroups is an isomorphism invariant. We use this computationally, via \texttt{SageMath}, to separate the groups arising in our classification.

\begin{proof}[Proof of Theorem~\ref{main2}]
Let $G_i=\langle x,y,z\mid r_i\rangle$ for $1\le i\le12$, where $r_i\in R_i$ as given in Lemma~\ref{lem:enumeration}. By Lemma~\ref{lem:ri-isomorphism}, the isomorphism type of $G_i$ depends only on the equivalence class $R_i$ and not on the particular choice of relator $r_i\in R_i$. To prove that these groups are pairwise non--isomorphic, we compare the abelianisations $\mathrm{Ab}(X)=X/[X,X]$ of low--index subgroups. If $G\cong H$, then for each $k$ the multisets
\[
\bigl\{\,\mathrm{Ab}(U)\mid U\le G,\ [G:U]=k\,\bigr\}
\quad\text{and}\quad
\bigl\{\,\mathrm{Ab}(V)\mid V\le H,\ [H:V]=k\,\bigr\}
\]
coincide.  Hence, to show $G_i\not\cong G_j$, it suffices to find an index $k$ such that one of these multisets differs. All computations were carried out in \texttt{SageMath} using low--index subgroup enumeration.  We record the separating invariants used in each case.

\medskip
\noindent\emph{The group $G_1$.}  
Among the index--$3$ subgroups of $G_1$, there is one with abelianisation $\mathbb Z^4\oplus\mathbb Z_9$, which does not occur for any index--$3$ subgroup of $G_j$ with $j>1$.  Hence $G_1\not\cong G_j$ for all $j>1$.

\medskip
\noindent\emph{The group $G_2$.}  
An index--$3$ subgroup of $G_2$ has abelianisation $\mathbb Z^4\oplus\mathbb Z_3^2$, which does not occur for any index--$3$ subgroup of $G_j$ with $j>2$.  Hence $G_2\not\cong G_j$ for all $j>2$.

\medskip
\noindent\emph{The group $G_3$.}  
Among index--$3$ subgroups, $G_3$ has one with abelianisation $\mathbb Z^4\oplus\mathbb Z_2^2$, which does not occur for $G_j$ with $j\in\{4,5,6,7,8,10,11\}$.  
At index $5$, the group $G_9$ has a subgroup with abelianisation $\mathbb Z^6\oplus\mathbb Z_{11}$, which does not occur for $G_3$, so $G_3\not\cong G_9$.  
Finally, $G_{12}$ has two index--$5$ subgroups with abelianisation $\mathbb Z^6\oplus\mathbb Z_2$, whereas $G_3$ has only one.  Hence $G_3$ is distinct from all $G_j$ with $j>3$.

\medskip
\noindent\emph{The group $G_4$.}  
Among index--$3$ subgroups, $G_4$ has one with abelianisation $\mathbb Z^4\oplus\mathbb Z_7$, which does not occur for $G_j$ with $j\in\{5,6,7,8,9,10,12\}$.  
Another index--$3$ subgroup has abelianisation $\mathbb Z^4\oplus\mathbb Z_2$, which does not occur for $G_{11}$.  Hence $G_4\not\cong G_j$ for all $j>4$.

\medskip
\noindent\emph{The group $G_5$.}  
An index--$3$ subgroup has abelianisation $\mathbb Z^5$, which does not occur for $G_j$ with $j\in\{6,7,9,10,12\}$.  
At index $4$, $G_5$ has a subgroup with abelianisation $\mathbb Z^5\oplus\mathbb Z_5$, which does not occur for $G_8$.  
Moreover, $G_{11}$ has an index--$3$ subgroup with abelianisation $\mathbb Z^4\oplus\mathbb Z_7$, which does not occur for $G_5$.  
Hence $G_5\not\cong G_j$ for all $j>5$.

\medskip
\noindent\emph{The group $G_6$.}  
An index--$3$ subgroup has abelianisation $\mathbb Z^4\oplus\mathbb Z_2$, which does not occur for $G_j$ with $j\in\{8,9,11,12\}$.  
An index--$4$ subgroup has abelianisation $\mathbb Z^5\oplus\mathbb Z_5$, which distinguishes $G_6$ from $G_7$ and $G_{10}$.  
Hence $G_6\not\cong G_j$ for all $j>6$.

\medskip
\noindent\emph{The group $G_7$.}  
An index--$3$ subgroup has abelianisation $\mathbb Z^4\oplus\mathbb Z_2$, which does not occur for $G_j$ with $j\in\{8,9,11,12\}$.  
At index $5$, $G_7$ has three subgroups with abelianisation $\mathbb Z^6\oplus\mathbb Z_2$, whereas $G_{10}$ has only two.  
Hence $G_7\not\cong G_j$ for all $j>7$.

\medskip
\noindent\emph{The group $G_8$.}  
An index--$3$ subgroup has abelianisation $\mathbb Z^5$, which does not occur for $G_j$ with $j\in\{9,10,12\}$.  
Moreover, $G_{11}$ has an index--$3$ subgroup with abelianisation $\mathbb Z^4\oplus\mathbb Z_7$, which does not occur for $G_8$.  
Hence $G_8\not\cong G_j$ for all $j>8$.

\medskip
\noindent\emph{The group $G_9$.}  
An index--$3$ subgroup has abelianisation $\mathbb Z^4\oplus\mathbb Z_2^2$, which distinguishes $G_9$ from $G_{10}$ and $G_{11}$.  
At index $5$, $G_9$ has a subgroup with abelianisation $\mathbb Z^6\oplus\mathbb Z_{11}$, which does not occur for $G_{12}$.  
Hence $G_9\not\cong G_j$ for all $j>9$.

\medskip
\noindent\emph{The group $G_{10}$.}  
An index--$3$ subgroup has abelianisation $\mathbb Z^4\oplus\mathbb Z_2$, which distinguishes $G_{10}$ from $G_{11}$ and $G_{12}$.  
Hence $G_{10}\not\cong G_j$ for all $j>10$.

\medskip
\noindent\emph{The group $G_{11}$.}  
An index--$3$ subgroup has abelianisation $\mathbb Z^4\oplus\mathbb Z_7$, which does not occur for $G_{12}$.  
Hence $G_{11}\not\cong G_{12}$.

\medskip
Since every pair $G_i,G_j$ with $i\neq j$ is separated by at least one low--index abelianisation invariant, the twelve groups are pairwise non--isomorphic.  The rest of the proof follows from Lemma \ref{hopf} since the groups $G_i$ are hyperbolic, hence Hopfian by \cite[Corollary 2.9]{MR4623546} (see also \cite{MR4140867,MR1660337}).
\end{proof}

\section*{Acknowledgments}
Much of the content of this article is based on the first author's thesis completed at the African Institute for Mathematical Sciences (AIMS) in Ghana (see~\cite{thesis}). The second author gratefully acknowledges the support of the London Mathematical Society through a Scheme~5 ``Collaborations with Developing Countries'' grant (Reference: 52107), which enabled travel to AIMS in 2023 for further discussions and collaboration on this work.

\bibliographystyle{alpha}
\bibliography{Refs}

\newpage
\appendix
\section{Automorphisms identifying relators}\label{app:automorphisms}
\begin{table}[h!]
\centering
\begin{tabular}{|c|c|c|}
\hline
$R_i$ & Word $w$ & Composition to $w_0$ \\
\hline
$R_1$ & $x^2 y^2 z^2 x z y$ & $w_0$  \\
      & $x^2 y x z y^2 z^2$ & $\phi_x \circ \phi_y$, cyclic permutation \\
      & $x^2 y^2 x z y z^2$ & $\rho_x \circ \rho_y \circ \rho_z$, invert, $\phi_z$, cyclic permutation \\
      & $x^2 y^2 z y x z^2$ & $\phi_x \circ \phi_z$, cyclic permutation \\
      & $x^2 y^2 z^2 y x z$ & $\phi_y \circ \rho_x \circ \rho_y \circ \rho_z$, cyclic permutation \\
\hline
$R_2$ & $x^2 y x z^2 y^2 z$ & $w_0$  \\
      & $x^2 y z y^2 x z^2$ & $\phi_x \circ \phi_y \circ \rho_x \circ \rho_y \circ \rho_z$, cyclic permutation \\
      & $x^2 y z^2 x z y^2$ & $\phi_x \circ \phi_z$, cyclic permutation \\
      & $x^2 y z^2 y^2 x z$ & $\rho_x \circ \rho_y \circ \rho_z$, invert, $\phi_x$, cyclic permutation \\
      & $x^2 y^2 x z^2 y z$ & $\phi_y$, cyclic permutation \\
      & $x^2 y^2 z x z^2 y$ & $\phi_x \circ \phi_z \circ \rho_x \circ \rho_y \circ \rho_z$, cyclic permutation \\
\hline
$R_3$ & $x^2 y x^{-1} z y z^{-1} y z$ & $w_0$  \\
      & $x^2 y z y^{-1} z y x^{-1} z$ & $\phi_x\circ\rho_z \circ \rho_y \circ \rho_x$, invert, cyclic permutation \\
\hline
$R_4$ & $x^2 y x^{-1} z y z^{-1} y^{-1} z$ & $w_0$  \\
      & $x^2 y z^{-1} y^{-1} z y x^{-1} z$ & $\phi_x\circ\rho_z \circ \rho_y \circ \rho_x$, invert, cyclic permutation \\
\hline
$R_5$ & $x^2 y x^{-1} z y^{-1} z y z$ & $w_0$  \\
      & $x^2 y z y z^{-1} y x^{-1} z$ & $\phi_x\circ\rho_z \circ \rho_y \circ \rho_x$, invert, cyclic permutation \\
\hline
$R_6$ & $x^2 y x^{-1} z y^{-1} z^{-1} y z$ & $w_0$  \\
      & $x^2 y z y^{-1} z^{-1} y x^{-1} z$ & $\phi_x\circ\rho_z \circ \rho_y \circ \rho_x$, invert, cyclic permutation \\
\hline
$R_7$ & $x^2 y z^{-1} x y^{-1} z y z$ & $w_0$  \\
      & $x^2 y z y z^{-1} x y^{-1} z$ & $\phi_x\circ\rho_z \circ \rho_y \circ \rho_x$, invert, cyclic permutation \\
\hline
$R_8$ & $x^2 y z^{-1} y z y x^{-1} z$ & $w_0$  \\
      & $x^2 y x^{-1} z y z y^{-1} z$ & $\phi_x\circ\rho_z \circ \rho_y \circ \rho_x$, invert, cyclic permutation \\
\hline
$R_9$ & $x^2 y z^{-1} y x^{-1} z y z$ & $w_0$  \\
      & $x^2 y z y x^{-1} z y^{-1} z$ & $\phi_x\circ\rho_z \circ \rho_y \circ \rho_x$, invert, cyclic permutation \\
\hline
$R_{10}$ & $x^2 y^{-1} z x y z y z^{-1}$ & $w_0$  \\
         & $x^2 y^{-1} z y z x y z^{-1}$ & $\phi_x\circ\rho_z \circ \rho_y \circ \rho_x$, invert, cyclic permutation \\
\hline
$R_{11}$ & $x^2 y^{-1} z y z^{-1} x y z$ & $w_0$  \\
         & $x^2 y z x y^{-1} z y z^{-1}$ & $\phi_x\circ\rho_z \circ \rho_y \circ \rho_x$, invert, cyclic permutation \\
\hline
$R_{12}$ & $x^2 y^{-1} x^{-1} z y z^{-1} y z$ & $w_0$  \\
         & $x^2 y z y^{-1} z y x^{-1} z^{-1}$ & $\phi_x\circ\rho_z \circ \rho_y \circ \rho_x$, invert, cyclic permutation \\
\hline
\end{tabular}
\caption{Automorphisms identifying the relators in each class $R_i$ with the representative $w_0$. We use the convention that $(f\circ g)(x)=g(f(x))$.}
\label{tab:appendix-maps}
\end{table}
\newpage
\section{Computational enumeration of $(2,9)$-special relators}\label{admissible}
\begin{lstlisting}[language=Python]
from sage.graphs.graph import Graph
#Generates all words of length `l` from a given alphabet.
def words(alphabet, l):
    if l == 0:
        give []
    else:
        for word in words(alphabet, l - 1):
            for letter in alphabet:
                give word + [letter]
                
#Constructs a star-graph for a given word `g`.
def stargraph(g):
    G = Graph(loops=True)
    G.add_vertices(['x','y','z','X','Y','Z'])    
    for i in range(len(g)):
        prev = g[i-1]       
        cur = g[i]          
        if cur.isupper():
            G.add_edge((prev, cur.lower()))
        else:
            G.add_edge((prev, cur.upper()))
    return G
    
#Generates words of length 9 from the alphabet {x,y,z,X,Y,Z} & filters them 
alphabet = ['x','y','z','X','Y','Z']
word_length = 9
for word in words(alphabet, word_length):
    if not (word[0:3] == ['x','x','y'] or word[0:3] == ['x','x','Y']):
        continue
    if not (word.count('x') + word.count('X') == 3 and
            word.count('y') + word.count('Y') == 3 and
            word.count('z') + word.count('Z') == 3 and
            word.count('y') > word.count('Y') and
            word.count('z') > word.count('Z')):
        continue
    SG = stargraph(word)
    if SG.girth() == 4 and SG.diameter() == 2:
        print(''.join(word))
        
#Running the code above gives this list of 32 words
words_list = [
    "xxyxzyyzz","xxyxzzyyz","xxyyxzyzz","xxyyxzzyz","xxyyzxzzy",
    "xxyyzyxzz","xxyyzzxzy","xxyyzzyxz","xxyzxzzyy","xxyzxYzyZ",
    "xxyzyyxzz","xxyzyXzYz","xxyzyZxYz","xxyzyZyXz","xxyzzxzyy",
    "xxyzzyyxz","xxyzYzyXz","xxyzYzyXZ","xxyzYZyXz","xxyXzyzYz",
    "xxyXzyZyz","xxyXzyZYz","xxyXzYzyz","xxyXzYZyz","xxyZxYzyz",
    "xxyZyzyXz","xxyZyXzyz","xxyZYzyXz","xxYzxyzyZ","xxYzyzxyZ",
    "xxYzyZxyz","xxYXzyZyz"
]
\end{lstlisting}
\end{document}